\newtheorem{theorem}{Theorem}[section]
\newtheorem{lemma}[theorem]{Lemma}
\newtheorem{remark}[theorem]{Remark}
\numberwithin{equation}{section}
\newcommand{\R}{\mathbb{R}}
\newcommand{\N}{\mathbb{N}}
\renewcommand{\epsilon}{\varepsilon}
\newcommand{\eps}{\varepsilon}
\newcommand{\e}{\varepsilon}
\newcommand{\ugu}{\;{\stackrel{k}{=}}\;}
\renewcommand{\le}{\leqslant}
\renewcommand{\ge}{\geqslant}
\begin{document}

\author[1]{Serena Dipierro}

\author[2]{Ovidiu Savin}

\author[1]{Enrico Valdinoci}

\affil[1]{\footnotesize Department of Mathematics and Statistics,
University of Western Australia,
35 Stirling Highway, WA6009 Crawley,
Australia \medskip } 

\affil[2]{\footnotesize Department of Mathematics, Columbia University,
2990 Broadway,
New York NY 10027, USA \medskip}

\title{On divergent fractional Laplace equations\thanks{The first and third authors are member of
INdAM and are supported by the Australian
Research Council Discovery Project DP170104880 NEW ``Nonlocal Equations at Work''.
The first author
is supported by the
Australian Research Council DECRA DE180100957 ``PDEs, free boundaries and
applications''. The second author is supported by the National Science Foundation grant
DMS-1500438.
Emails:
{\tt serena.dipierro@uwa.edu.au}, {\tt savin@math.columbia.edu}, {\tt enrico.valdinoci@uwa.edu.au} }}

\date{}

\maketitle

\begin{abstract}
We consider the divergent fractional Laplace operator presented in~\cite{POLINOMI}
and we prove three types of results.

Firstly, we show that any given function can be locally shadowed by a solution
of a divergent fractional Laplace equation which is also prescribed in a neighborhood of infinity.

Secondly, we take into account the Dirichlet problem for the divergent fractional Laplace equation,
proving the existence of a solution and characterizing its multiplicity.

Finally, we take into account the case of nonlinear equations, obtaining
a new approximation results.

These results maintain their interest also in the case
of functions for which the fractional Laplacian can be defined
in the usual sense.
\end{abstract}

\bigskip\bigskip

\section{Introduction}

Given~$u:\R^n\to\R$ and~$s\in(0,1)$,
to define the fractional Laplacian of~$u$,
\begin{equation}\label{CLfar} (-\Delta)^s u(x):=\lim_{\rho\searrow0}\int_{\R^n\setminus B_\rho(x)}
\frac{u(x)-u(y)}{|x-y|^{n+2s}}\,dy,\end{equation}
one typically needs two main requisites on the function~$u$:
\begin{itemize}
\item $u$ has to be sufficiently smooth in the vicinity of~$x$,
for instance~$u\in C^\gamma(B_\delta(x))$ for some~$\delta>0$ and~$\gamma>2s$,
\item $u$ needs to have a controlled growth at infinity, for instance
\begin{equation}\label{GR01d}
\int_{\R^n}\frac{|u(x)|}{1+|x|^{n+2s}}\,dx<+\infty.
\end{equation}
\end{itemize}
Nevertheless, in~\cite{POLINOMI} we have recently introduced
a new notion of ``divergent'' fractional Laplacian,
which can be used even when condition~\eqref{GR01d} is violated.
This notion takes into account the case of functions with
polynomial growth, for which the classical definition in~\eqref{CLfar}
makes no sense, and it recovers the classical definition for functions
with controlled growth such as in~\eqref{GR01d}.\medskip

The notion of divergent fractional Laplacian possesses
several interesting features and technical advantages,
including suitable Schauder estimates
in which the full smooth
H\"older norm of the solution is controlled by a suitable
seminorm of the nonlinearity. Moreover, compared
to~\eqref{CLfar},
the notion of divergent fractional Laplacian
is conceptually closer to the classical case
in the sense that it requires a sufficient degree
of regularity of the function~$u$ at a given point,
without global conditions (up to a mild control at
infinity of polynomial type), thus attempting to make
the necessary requests as close as possible to  the case
of the classical Laplacian.\medskip

In this article, we consider the setting of the divergent Laplacian
and we obtain the following results:
\begin{itemize}
\item an approximation result with solutions
of divergent Laplacian equations: we will show that
these solutions can locally shadow any prescribed function,
maintaining also a complete prescription at infinity,
\item a characterization of the Dirichlet problem: we will show
that the (possibly inhomogeneous) Dirichlet problem is solvable
and we determine the multiplicity of the solutions,
\item an approximation result with solutions of nonlinear divergent Laplacian equations,
up to a small error also in the forcing term. 
\end{itemize}

To state these results in detail,
we now recall the precise framework for the divergent fractional
Laplacian.
Given~$k\in\N$, we consider the space of functions 
$$ {\mathcal{U}}_k:=
\left\{ u:\R^n\to\R, {\mbox{ s.t. $u$ is continuous in $B_1$ and }}
\int_{\R^n}\frac{|u(x)|}{1+|x|^{n+2s+k}}\,dx<+\infty
\right\}.$$
Then (see Definition~1.1 in~\cite{POLINOMI}) we use the notation
$$ \chi_R(x):=\begin{cases}
1 & {\mbox{ if }}x\in B_R,\\
0 & {\mbox{ if }}x\in \R^n\setminus B_R,
\end{cases}$$
and
we say that
\begin{equation}\label{RGAs} (-\Delta)^su\ugu f\qquad{\mbox{ in }}B_1\end{equation}
if there exist a family of polynomials~$P_R$, which have degree at most~$k-1$,
and functions~$f_R : B_1\to\R$ such
that~$(-\Delta)^su=f_R+P_R$
in~$B_1$ in the viscosity sense, with
$$ \lim_{R\to+\infty}f_R(x) = f(x)$$
for any~$ x\in B_1$.\medskip

Interestingly, one can also think that
the right hand side of
equation~\eqref{RGAs} is not just a function, but an equivalence class of
functions modulo polynomials, since one can freely
add to~$f$ a polynomial of degree~$k$ when~$s\in\left(0,\frac12\right]$
and of degree~$k+1$ when~$s\in\left(\frac12,1\right)$
(see Theorem~1.5 in~\cite{POLINOMI}).
\medskip

The first result that we provide in this setting
states that every given function
can be modified in an arbitrarily small way in~$B_1$,
remaining unchanged in a large ball, in such a way
to become $s$-harmonic with respect to the divergent fractional Laplacian.

\begin{theorem}\label{ALL}[All divergent functions are locally
$s$-harmonic up to a small error]
Let~$k$, $m\in\N$ and~$u:\R^n\to\R$ be such that~$u\in
C^m(\overline{B_1})$ and
\begin{equation*} 
\int_{B_1^c}\frac{|u(x)|}{|x|^{n+2s+k}}\,dx<+\infty.\end{equation*}
Then, for any~$\eps>0$ there exist~$u_\eps$ and~$R_\eps>1$
such that
\begin{eqnarray}\label{DES:ALL1}
&& (-\Delta)^s u_\eps \ugu 0 \quad{\mbox{ in }}B_1,\\
\label{DES:ALL2}
&& \| u_\eps-u\|_{C^m({B_1})}\le \eps\\
\label{dtsfgyvhoqwfyguywqegfiowel}
{\mbox{and }}&& u_\eps=u \quad{\mbox{ in }}B_{R_\eps}^c.
\end{eqnarray}
\end{theorem}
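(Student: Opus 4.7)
The plan is to modify $u$ inside a large ball by adding a small, compactly supported correction $\psi$, chosen so that the classical fractional Laplacian of $\psi$ on $B_1$ exactly cancels the divergent fractional Laplacian of $u$ on $B_1$. Since $\psi$ is compactly supported, its contribution to the truncated Laplacian stabilizes once $R$ is large enough, so the cancellation survives the limit in $R$.

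\emph{Step~1 (the divergent Laplacian of $u$).} The hypotheses $u\in C^m(\overline{B_1})$ and the weighted $L^1$ integrability at infinity place $u$ in $\mathcal{U}_k$. Hence, by the framework of~\cite{POLINOMI}, there exist polynomials $P_R$ of degree at most $k-1$ and functions $f_R$ with $(-\Delta)^s(u\chi_R)=f_R+P_R$ in $B_1$ (viscosity) and a limit $F:B_1\to\R$ with $f_R\to F$ pointwise. Interior regularity for the truncated operator, combined with the assumed smoothness of $u$ on $\overline{B_1}$, guarantees that $F$ is regular enough for the approximation step below.

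\emph{Step~2 (compactly supported correction with prescribed source).} I construct compactly supported $\psi:\R^n\to\R$ with
\[
(-\Delta)^s\psi=-F\ \text{ in }B_1\text{ (viscosity)},\qquad \|\psi\|_{C^m(\overline{B_1})}\le\eps.
\]
To do this, first solve the classical fractional Dirichlet problem $(-\Delta)^s\psi_0=-F$ in $B_2$ with zero exterior data, producing a compactly supported $\psi_0$ realizing the correct right-hand side on $B_1$. Then apply the authors' earlier ``all functions are locally $s$-harmonic up to a small error'' theorem (for the standard fractional Laplacian) to the trace $-\psi_0|_{B_1}$: this yields a compactly supported, classically $s$-harmonic $\eta$ with $\|\eta+\psi_0\|_{C^m(\overline{B_1})}\le\eps$. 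Setting $\psi:=\psi_0+\eta$ gives $(-\Delta)^s\psi=-F+0=-F$ on $B_1$ and $\|\psi\|_{C^m(\overline{B_1})}\le\eps$.

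\emph{Step~3 (verification).} Define $u_\eps:=u+\psi$, and choose $R_\eps$ with $\mathrm{supp}\,\psi\subset B_{R_\eps}$. Then~\eqref{DES:ALL2} follows from $\|\psi\|_{C^m(\overline{B_1})}\le\eps$ and~\eqref{dtsfgyvhoqwfyguywqegfiowel} from the compact support of $\psi$. For~\eqref{DES:ALL1}, for every $R>R_\eps$ we have $\psi\chi_R=\psi$, so in the viscosity sense on $B_1$
\[
(-\Delta)^s(u_\eps\chi_R)-P_R=\bigl((-\Delta)^s(u\chi_R)-P_R\bigr)+(-\Delta)^s\psi.
\]
The first summand tends to $F$ pointwise on $B_1$ by Step~1 while the last equals $-F$ on $B_1$ by Step~2, so the sum tends to $0$ pointwise as $R\to+\infty$. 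With the same polynomials $P_R$, this is exactly the defining condition $(-\Delta)^s u_\eps\ugu 0$ in $B_1$.

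\emph{Main obstacle.} The crux is Step~2: the homogeneous DSV approximation theorem must be upgraded so as to prescribe a nontrivial source together with a small local trace. The delicate point is regularity --- both of $F$ (the divergent Laplacian of a merely $C^m$ function) and of the auxiliary Dirichlet solution $\psi_0$, which must lie in $C^m(\overline{B_1})$ for the approximation theorem to apply. A bootstrap using interior Schauder estimates for $(-\Delta)^s$ starting from the $C^m$ regularity of $u$ should provide the needed regularity, possibly after an inessential enlargement of $m$.
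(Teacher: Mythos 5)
Your overall strategy (add a compactly supported corrector whose classical fractional Laplacian cancels the divergent Laplacian of $u$ on $B_1$) is close in spirit to the paper's, but Step~2 has a genuine gap that your proposed bootstrap does not close. The function $F$, i.e.\ the pointwise representative of $(-\Delta)^s u$ in $B_1$, is built from integrals such as $\int_{B_2}\frac{u(x)-u(y)}{|x-y|^{n+2s}}\,dy$, and since $u$ is only assumed $C^m(\overline{B_1})$ while outside $B_1$ it is merely weighted-$L^1$, this quantity need not extend past $\partial B_1$ and may even blow up as $x\to\partial B_1$; for $m=0$ (or $m=1$, $s\ge\frac12$) it is not a well-defined function at all. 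Consequently you cannot pose the Dirichlet problem $(-\Delta)^s\psi_0=-F$ on a ball strictly containing $\overline{B_1}$, and if you pose it on $B_1$ itself, the solution is only $C^s$ up to $\partial B_1$ (Ros-Oton--Serra), so $\psi_0\notin C^m(\overline{B_1})$ and the approximation theorem of \cite{ALL-FUNCTIONS} cannot be applied to $-\psi_0$. Interior Schauder estimates give regularity only on compact subsets of the domain where the equation holds, which is exactly what is missing here; ``enlarging $m$'' does not help because the obstruction is at $\partial B_1$, not in the order of differentiability.

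The paper circumvents precisely this difficulty by never computing the fractional Laplacian of the near part of $u$. It splits $u$ into $\chi_{R}u$ and $(1-\chi_{R})u$: the compactly supported near part is approximated directly by an $s$-harmonic $v_\eps$ via \cite{ALL-FUNCTIONS} (no right-hand side to cancel), while the far part vanishes on $B_R$ with $R$ large, so its divergent Laplacian $f_\eps$ on the \emph{larger} ball $B_2$ is smooth and arbitrarily small (Lemma~2.1); the corrector $w_\eps$ then solves a Dirichlet problem on $B_2$ with this small smooth datum, and interior Schauder estimates on $B_1\subset\subset B_2$ legitimately give $\|w_\eps\|_{C^m(B_1)}\le C\eps$. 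If you want to salvage your route, you would first have to replace $u$ in the annulus $B_2\setminus B_1$ by a $C^m$ extension of $u|_{\overline{B_1}}$ (which is permitted, since only the values in $B_1$ and outside $B_{R_\eps}$ are constrained), so that $F$ becomes H\"older continuous on a neighborhood of $\overline{B_1}$ and the Dirichlet problem can be posed on $B_{3/2}$, say; but this extra step, together with the bookkeeping showing $F\in C^{m-2s}$ rather than $C^m$ and the exclusion of small $m$, is exactly what is missing from the proposal as written.
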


A graphical sketch of Theorem~\ref{ALL} is given in Figure~\ref{fig:1}
(notice the possible wild oscillations of~$u_\e$ in~$B_{R_\e}\setminus B_1$).

\begin{figure}[htbp]
  \centering
  \includegraphics[width=0.6\linewidth]{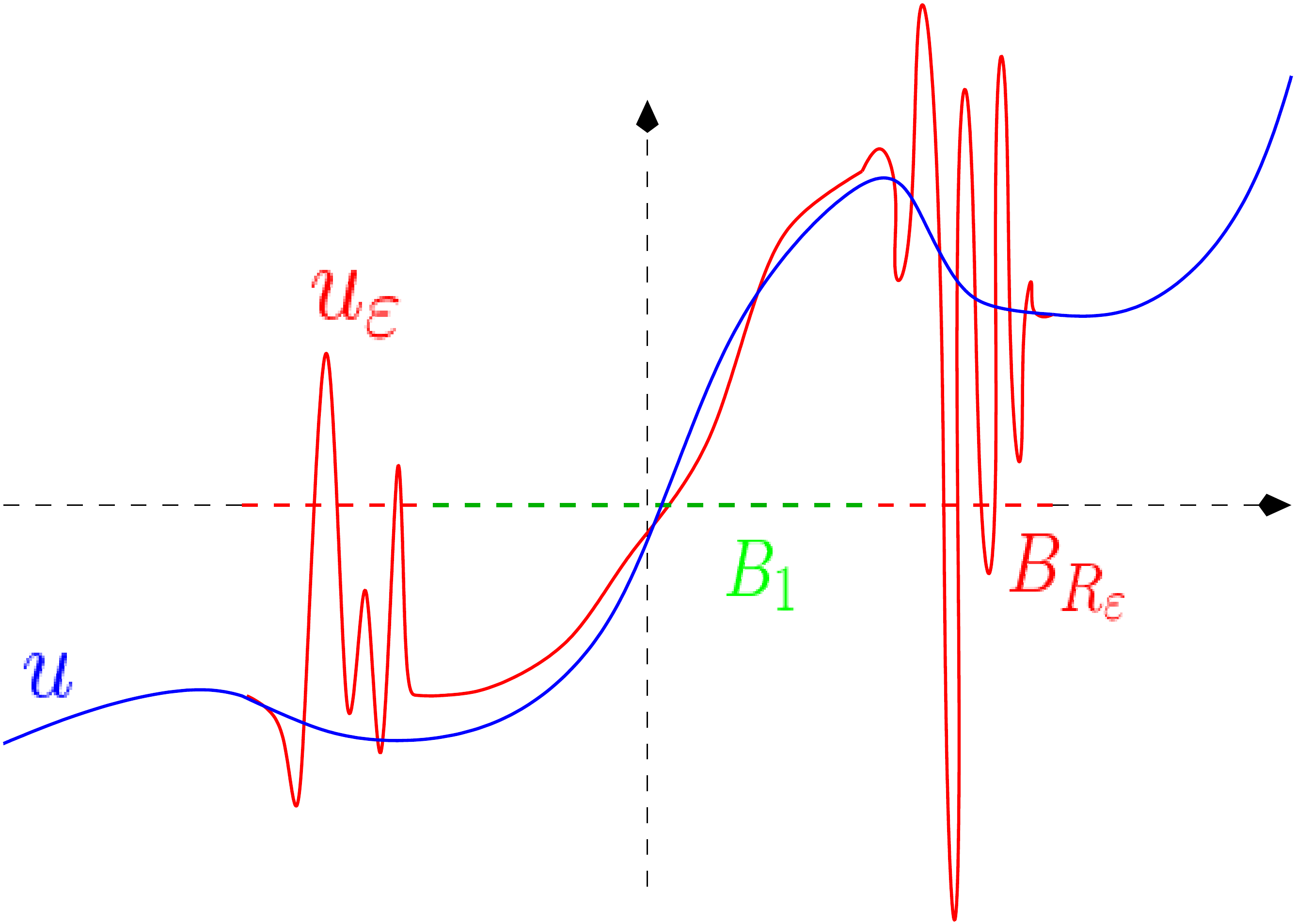}
  \caption{\it {{The approximation result in Theorem~\ref{ALL}.}}}
    \label{fig:1}
\end{figure}

\begin{remark}{\rm
When~$k=0$ and~$u=0$ outside~$B_2$,
Theorem~\ref{ALL} reduces to the main result of~\cite{ALL-FUNCTIONS}.
Interestingly, in the case considered here, one
can preserve the values of the given function~$u$ at infinity
and, if the growth of~$u$ at infinity is ``too fast'' for the classical
fractional Laplacian to be defined, then the result still carries on,
in the divergent fractional Laplace setting.}
\end{remark}

\begin{remark}{\rm We observe that Theorem~\ref{ALL}
does not hold under the additional assumption that
\begin{equation}\label{STdffOA0}
{\mbox{$|u_\e(x)|\le P(x)$ for all~$x\in\R^n$,}}\end{equation}
for a given polynomial~$P$ (that is, one cannot
replace a growth assumption at infinity
with a pointwise bound).
Indeed, under assumption~\eqref{STdffOA0},
we have that
$$ \int_{\R^n} \frac{|u_\e(x)|}{1+|x|^{n+2s+d}}\,dx\le
\int_{\R^n} \frac{|P(x)|}{1+|x|^{n+2s+d}}\,dx =:J<+\infty,$$
being~$d\in\N$ the degree of the polynomial~$P$.
As a consequence of this and~\eqref{DES:ALL1},
we deduce from Theorem~1.3 of~\cite{POLINOMI}
that for any~$\gamma>0$ such that~$\gamma$ and~$\gamma+2s$ are not integer,
$$ \|u_\e\|_{C^{\gamma+2s}}(B_{1/2})\le C\,J,$$
for some~$C$ depending only on~$J$,~$n$, $s$, $\gamma$
and~$d$.
In particular, if~$\gamma+2s\ge m$,
we would have from~\eqref{DES:ALL2} that
$$ \e\ge \|u_\e-u\|_{C^m(B_1)}\ge \|u_\e-u\|_{C^m(B_{1/2})}
\ge \|u\|_{C^m(B_{1/2})}-\|u_\e\|_{C^m(B_{1/2})}\ge
\|u\|_{C^m(B_{1/2})}-C\,J.$$
This set of inequalities would be violated for~$\e\in(0,1)$ by any function~$u$
satisfying
\begin{equation}\label{STdffOA} \|u\|_{C^m(B_{1/2})}\ge C\,J+1.\end{equation}
That is, solutions with a large $C^m$-norm (more specifically
with a norm as in~\eqref{STdffOA}) cannot be approximated arbitrarily well by $s$-harmonic functions (not even ``modulo polynomials'') that satisfy a polynomial bound
as in~\eqref{STdffOA0}.

Interestingly, this remark
is independent from~$R_\e$ in~\eqref{dtsfgyvhoqwfyguywqegfiowel}
(hence, it is not possible to arbitrarily improve
the approximation results if we require an additional polynomial bound, even if we drop the request that the approximating
function is compactly supported).
}\end{remark}

\medskip

Theorem~\ref{ALL} is also related to some recent results
in~\cite{MR3716924, MR3774704, MR3935264, KRYL, CAR, CARBOO}
(see~\cite{MR3790948} for an elementary exposition in the case
of the fractional Laplacian in dimension~1).
\medskip

Next result focuses on the
Dirichlet problem for divergent fractional Laplacians.
We show that, given an external datum and a forcing term,
the Dirichlet problem has a solution.
Differently from the classical case,
when~$k\not=0$ such solution is not unique, and we determine
the dimension of the multiplicity space.

\begin{theorem}\label{DIRI}[Solvability of the Dirichlet problem for divergent fractional
Laplacians]
Let~$k\in\N$ and~$u_0:B_1^c\to\R$ be such that
\begin{equation*} 
\int_{B_1^c}\frac{|u_0(x)|}{|x|^{n+2s+k}}\,dx<+\infty.\end{equation*}
Let~$f$ be continuous in~$B_1$.
Then, there exists a
function~$u\in{\mathcal{U}}_k$ such that
\begin{equation}\label{DIRI2}
\left\{
\begin{matrix}
(-\Delta)^s u \ugu f \quad{\mbox{ in }}\;B_1,\\
u = u_0
\quad{\mbox{ in }}\;B_1^c.
\end{matrix}
\right.
\end{equation}
Also, the space of solutions of~\eqref{DIRI2} has dimension~$N_k$,
with
\begin{equation}\label{NK} N_k:=
\sum_{j=0}^{k-1}\binom{j+n-1}{n-1}.\end{equation}
\end{theorem}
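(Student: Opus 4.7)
The key analytic tool is a Taylor expansion of the singular kernel $|x-y|^{-n-2s}$ in the inner variable $x$. Since this kernel is smooth in $x\in B_1$ uniformly for $|y|\ge 2$, and its $k$-th $x$-derivative is bounded by $C|y|^{-n-2s-k}$ on $B_1\times\{|y|\ge 2\}$, Taylor expansion around $x=0$ yields
\[
\frac{1}{|x-y|^{n+2s}}=\sum_{j=0}^{k-1}p_j(x)\,h_j(y)+r_k(x,y),\qquad |r_k(x,y)|\le C\,|x|^k\,|y|^{-n-2s-k},
\]
uniformly on $B_1\times\{|y|\ge 2\}$, where $p_j$ is a polynomial in $x$ of degree $j$. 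Crucially, the bound on $r_k$ is uniform up to $\partial B_1$, because $|x-y|^{-n-2s}$ remains smooth in $x\in B_1$ whenever $|y|\ge 2$. Using the integrability hypothesis on $u_0$, the regularized exterior contribution
\[
g(x):=-c_{n,s}\int_{B_2\setminus B_1}\frac{u_0(y)}{|x-y|^{n+2s}}\,dy-c_{n,s}\int_{B_2^c}r_k(x,y)\,u_0(y)\,dy
\]
is a well-defined continuous function on $B_1$.

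For existence, for each polynomial $P$ of degree $\le k-1$ I would solve the classical Dirichlet problem $(-\Delta)^sv_P=f-g+P$ in $B_1$, $v_P=0$ in $B_1^c$; this has a unique viscosity solution by standard theory. Set $u^{(P)}:=v_P+u_0(1-\chi_1)$, which lies in $\mathcal{U}_k$ and agrees with $u_0$ in $B_1^c$. To verify $(-\Delta)^s u^{(P)}\ugu f$, truncate: for $R'\ge 1$, $u^{(P)}\chi_{R'}=v_P+u_0(\chi_{R'}-\chi_1)$, whence a direct computation in $B_1$ gives
\[
(-\Delta)^s(u^{(P)}\chi_{R'})(x)=(f-g+P)(x)-c_{n,s}\int_{B_{R'}\setminus B_1}\frac{u_0(y)}{|x-y|^{n+2s}}\,dy.
\]
Applying the expansion above to the integral and isolating the $R'$-divergent polynomial part $Q_{R'}(x)$ of degree $\le k-1$, the remainder converges pointwise to $-g(x)$ as $R'\to\infty$, so $(-\Delta)^s(u^{(P)}\chi_{R'})=f+(P-Q_{R'})+\epsilon_{R'}$ with $\epsilon_{R'}\to 0$ pointwise on $B_1$. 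This matches~\eqref{RGAs} with $P_{R'}=P-Q_{R'}$ and $f_{R'}\to f$, so $u^{(P)}$ is a solution of~\eqref{DIRI2}.

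For multiplicity, the map $P\mapsto u^{(P)}$ is linear and injective, so the solution set has dimension at least $N_k$. To see every solution arises this way, let $u$ be any solution of~\eqref{DIRI2} and set $v:=u-u^{(0)}$. Then $v$ vanishes outside $B_1$, hence $(-\Delta)^sv$ is classically a bounded function on $\mathbb{R}^n$ and $v\chi_R=v$ for every $R\ge 1$. The definition of $\ugu$ forces $(-\Delta)^sv=f_R+P_R$ for all $R$ with $f_R\to 0$ in $B_1$; since the left-hand side is independent of $R$, the polynomials $P_R$ converge pointwise on the open set $B_1$ to a polynomial $P$ of degree $\le k-1$ (the pointwise limit on an open set of polynomials of bounded degree is again such a polynomial), and $(-\Delta)^sv=P$ classically in $B_1$. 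Uniqueness of the classical Dirichlet problem with zero exterior data then gives $v=v_P$, hence $u=u^{(P)}$. The dimension of the solution space is therefore exactly that of the polynomials in $n$ variables of degree $\le k-1$, namely $N_k$.

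The main obstacle is the bookkeeping underpinning the second paragraph: one must check that the polynomial corrections generated by truncating the Taylor expansion of the kernel combine consistently with the chosen $P$ so that the residual error $\epsilon_{R'}$ really tends to zero pointwise in $B_1$ as $R'\to\infty$. The essential technical input is the uniformity of the Taylor remainder up to $\partial B_1$, which is only available because we expand the smooth kernel $|x-y|^{-n-2s}$ (on $|y|\ge 2$, where it is away from its singularity) rather than the Poisson kernel of $B_1$, whose $(1-|x|^2)^s$ factor would spoil the higher-$x$-derivative bounds near the boundary.
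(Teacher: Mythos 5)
Your overall architecture coincides with the paper's: regularize the far-field contribution of the exterior datum via a degree-$(k-1)$ Taylor expansion of the kernel in $x$ (this is exactly the content of Remark~3.5 in \cite{POLINOMI}, which you re-derive), reduce to a classical Dirichlet problem in $B_1$, and identify the multiplicity space with the classical solutions of $(-\Delta)^s v_P=P$ in $B_1$, $v_P=0$ outside, for $P$ of degree at most $k-1$. There are, however, two genuine gaps. First, you convert the contribution of $u_0$ on the annulus $B_2\setminus B_1$ into a forcing term $\int_{B_2\setminus B_1}|x-y|^{-n-2s}u_0(y)\,dy$. Under the standing hypothesis, $u_0$ is merely in $L^1(B_2\setminus B_1)$, so this integral is finite for each fixed $x$ in the open ball but in general blows up like $\mathrm{dist}(x,\partial B_1)^{-2s}$ as $x\to\partial B_1$ (already for $u_0\equiv 1$ on the annulus). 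Hence your $g$ is neither bounded nor continuous up to $\partial B_1$, and the ``standard theory'' you invoke for the viscosity Dirichlet problem $(-\Delta)^s v_P=f-g+P$ (e.g.\ \cite{ROS-JMPA}, which requires a bounded right-hand side) does not apply as stated. The paper avoids this by keeping $u_2:=\chi_{B_2\setminus B_1}\,u_0$ as the \emph{exterior datum} of the auxiliary classical problem and regularizing only the far piece $\chi_{B_2^c}\,u_0$; your construction should be amended in the same way.

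Second, in the uniqueness direction you assert that $v:=u-u^{(0)}$, being continuous and supported in $\overline{B_1}$, has a ``classically bounded'' fractional Laplacian. This does not follow: continuity plus compact support does not make the principal value in~\eqref{CLfar} converge pointwise, so the identities $(-\Delta)^s v=f_R+P_R$ hold only in the viscosity sense, and one cannot simply ``subtract $f_R$ from the left-hand side'' to conclude that the polynomials $P_R$ converge to a polynomial $P$ with $(-\Delta)^s v=P$ classically. The missing input is precisely the consistency statement of Lemma~3.9 in \cite{POLINOMI}: if $w=0$ outside $B_1$ and $(-\Delta)^s w\ugu 0$ in $B_1$, then $(-\Delta)^s w\;{\stackrel{0}{=}}\;P$ for some polynomial of degree at most $k-1$, after which uniqueness for the classical Dirichlet problem gives $w=u_P$. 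Your argument identifies the correct limiting object but omits the regularity/consistency step that makes the passage from the divergent to the classical equation rigorous.
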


With the aid of Theorems~\ref{ALL}
and~\ref{DIRI}, we can also consider the case of nonlinear equations,
namely the case in which the right hand side depends also
on the solution (as well as on its derivatives, since
the result that we provide is general enough to comprise such a case too).

In this setting, we establish that any prescribed function
satisfies any prescribed nonlinear
(and possibly divergent) fractional Laplace equation, up to an arbitrarily small error,
once we are allowed to make arbitrarily small modifications
of the given function in a given region, preserving its values
at infinity. The precise result that we have is the following one:

\begin{theorem}\label{NONLI}[All divergent functions almost
solve nonlinear equations]
Let~$k$, $m\in\N$ and~$u:\R^n\to\R$ be such that~$u\in
C^{2m}(\overline{B_1})$ and
\begin{equation*} 
\int_{B_1^c}\frac{|u(x)|}{|x|^{n+2s+k}}\,dx<+\infty.\end{equation*}
Let
$$ N(m):= n+\sum_{j=0}^m n^j$$
and let~$F\in C^m(\R^{N(m)})$.

Then, for any~$\eps>0$ there exist~$u_\eps$,
$\eta_\e:\R^n\to\R$ and~$R_\eps>1$
such that
\begin{eqnarray}
\label{AMLO-1}&& (-\Delta)^s u_\eps(x) \ugu F\big(x,u_\e(x),\nabla u_\e(x),\dots,D^m u_\e(x)\big)
+\eta_\e(x) \quad{\mbox{ for all }}x\in B_1,\\
\label{AMLO-2}
&& \| \eta_\e\|_{L^\infty(B_1)}\le \eps,\\
\label{AMLO-3}
&& \| u_\eps-u\|_{C^m({B_1})}\le \eps\\
\label{AMLO-4}
{\mbox{and }}&& u_\eps=u \quad{\mbox{ in }}B_{R_\eps}^c.
\end{eqnarray}
\end{theorem}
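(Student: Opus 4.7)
The plan is to combine Theorems~\ref{DIRI} and~\ref{ALL}. First we produce a reference function $v$ whose divergent fractional Laplacian equals the prescribed nonlinear forcing evaluated at the \emph{original} datum~$u$; then we add to $v$ a small $s$-harmonic correction, supplied by Theorem~\ref{ALL}, that interpolates between $v$ in a neighborhood of $B_1$ and $u$ at infinity, while keeping the resulting function close to $u$ in $C^m(B_1)$.

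Concretely, set $g(x):=F(x,u(x),\nabla u(x),\dots,D^m u(x))$ on~$B_1$: the hypotheses $u\in C^{2m}(\overline{B_1})$ and $F\in C^m(\R^{N(m)})$ imply $g\in C^m(\overline{B_1})$. Extend $g$ to a slightly larger ball~$B_r$, say~$r=2$, preserving $C^m$ regularity. Apply the rescaled version of Theorem~\ref{DIRI} on~$B_r$ with forcing~$g$ and exterior datum~$u$: we obtain $v\in{\mathcal{U}}_k$ with $(-\Delta)^s v\ugu g$ in~$B_r$ and $v=u$ in~$B_r^c$. Since $B_1\subset\subset B_r$ and $g\in C^m$, the interior Schauder-type estimates recalled in the Introduction (see~\cite{POLINOMI}) give $v\in C^m(\overline{B_1})$. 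Set $w:=u-v$: then $w\in C^m(\overline{B_1})$, $w\equiv 0$ outside~$B_r$, and the integrability hypothesis of Theorem~\ref{ALL} is trivially satisfied.

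For a small $\e'>0$ to be fixed below, Theorem~\ref{ALL} applied to $w$ yields $w_{\e'}$ and $R_{\e'}>r$ (possibly after enlarging $R_{\e'}$) with $(-\Delta)^s w_{\e'}\ugu 0$ in~$B_1$, $\|w_{\e'}-w\|_{C^m(B_1)}\le\e'$, and $w_{\e'}\equiv w\equiv 0$ in~$B_{R_{\e'}}^c$. Define $u_\e:=v+w_{\e'}$. By linearity of the divergent fractional Laplacian modulo polynomials one has $(-\Delta)^s u_\e\ugu g$ in~$B_1$; moreover $u_\e\equiv u$ outside~$B_{R_{\e'}}$ and $\|u_\e-u\|_{C^m(B_1)}=\|w_{\e'}-w\|_{C^m(B_1)}\le\e'$. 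Setting
$$ \eta_\e(x):=F(x,u(x),\dots,D^m u(x))-F(x,u_\e(x),\dots,D^m u_\e(x)), $$
the previous identity becomes~\eqref{AMLO-1}. Since $F$ is locally Lipschitz and $u_\e$ stays in a fixed ball of $C^m(B_1)$ for $\e'\le 1$, we have $\|\eta_\e\|_{L^\infty(B_1)}\le L\e'$ for a constant $L$ independent of~$\e'$; choosing $\e':=\e/(1+L)$ and $R_\e:=R_{\e'}$ yields~\eqref{AMLO-1}--\eqref{AMLO-4}. The only genuinely delicate point is the $C^m(\overline{B_1})$ regularity of~$v$ needed to apply Theorem~\ref{ALL} to~$w$; this is why we solve the Dirichlet problem on the strictly larger ball~$B_r$ and invoke the interior Schauder estimates from~\cite{POLINOMI}, after which the proof reduces to linearity of the divergent operator and continuity of~$F$.
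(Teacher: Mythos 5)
Your proposal is correct and follows essentially the same strategy as the paper: solve an auxiliary divergent Dirichlet problem on a slightly larger ball with forcing $F(x,u,\dots,D^m u)$ via Theorem~\ref{DIRI}, use the Schauder estimates of~\cite{POLINOMI} to get $C^m$ regularity of that solution $v$, correct $w=u-v$ by Theorem~\ref{ALL}, and absorb the nonlinear mismatch into $\eta_\e$ via local Lipschitz continuity of~$F$. The only (immaterial) differences are that the paper takes exterior datum $0$ for $v$ rather than $u$, and extends $u$ in $C^{2m}$ before composing with $F$ rather than extending the composed function $g$ directly.
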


\begin{remark} {\rm We think that it is a very interesting {\em open problem}
to determine whether the statement in Theorem~\ref{NONLI}
holds true also with~$\eta_\e:=0$.
This would give that any given function can be locally approximated
arbitrarily well by functions which solve exactly (and not only approximatively)
a nonlinear equation.}\end{remark}

\begin{remark} {\rm All the results presented here
maintain their own interest even in the case~$k=0$:
in this case, the definition of divergent fractional Laplacian
boils down to the usual fractional Laplacian (see Corollary~3.8
in~\cite{POLINOMI}).}\end{remark}
\medskip

The rest of this article is organized as follows.
In Section~\ref{SF-MAJo1} we give the proof of Theorem~\ref{ALL},
in Section~\ref{SF-MAJo2} we deal with the proof of Theorem~\ref{DIRI},
and in Section~\ref{SF-MAJo3} we focus on Theorem~\ref{NONLI}.

\section{Proof of Theorem~\ref{ALL}}\label{SF-MAJo1}

To prove Theorem~\ref{ALL}, we first present an observation
on the decay of the divergent fractional Laplacians for functions that
vanish on a large ball:

\begin{lemma}\label{LT8}
Let~$k\in\N$ and~$R>3$.
Let~$u:\R^n\to\R$ be such that~$u=0$ in~$B_R$ and
\begin{equation}\label{Dh0} 
\int_{\R^n}\frac{|u(x)|}{1+|x|^{n+2s+k}}\,dx<+\infty.\end{equation}
Then, there exists~$f:B_1\to \R$ such that~$(-\Delta)^s u\ugu f$
in~$B_1$ and for which the following statement holds true:
for any~$\eps>0$
and any~$m\in\N$, there exists~${\bar{R}_\eps}>3$ such that
if~$R\ge {\bar{R}_\eps}$ then
\begin{equation}\label{Dh}
\| f\|_{C^m(B_{1})}\le \eps.\end{equation}
\end{lemma}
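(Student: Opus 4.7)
The plan is to exploit that $u$ vanishes on the large ball $B_R\supset B_1$, so the fractional Laplacian at $x\in B_1$ reduces to an integral concentrated on $\{|y|\ge R\}$; the divergence that appears as one truncates at larger and larger scales is polynomial in $x$ of degree at most $k-1$, which is precisely what is absorbed by the relation $\ugu$. Concretely, for $\rho>R$ the function $u\chi_\rho$ is bounded and compactly supported, hence has a classical fractional Laplacian, and since $u\chi_\rho\equiv 0$ on $B_R\supset B_1$, for every $x\in B_1$ one has
\begin{equation*}
(-\Delta)^s(u\chi_\rho)(x)=-\int_{R\le|y|\le\rho}\frac{u(y)}{|x-y|^{n+2s}}\,dy.
\end{equation*}

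Next I would Taylor-expand the kernel $K(x,y):=|x-y|^{-n-2s}$ in the variable $x$ around the origin: $K=T_{k-1}+R_k$, where $T_{k-1}(\cdot,y)$ is a polynomial of degree at most $k-1$ in $x$. Setting
\begin{equation*}
P_\rho(x):=-\int_{R\le|y|\le\rho}T_{k-1}(x,y)\,u(y)\,dy,\qquad f_\rho(x):=-\int_{R\le|y|\le\rho}R_k(x,y)\,u(y)\,dy,
\end{equation*}
yields the decomposition $(-\Delta)^s(u\chi_\rho)=P_\rho+f_\rho$ on $B_1$, with $P_\rho$ a polynomial of degree at most $k-1$. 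This fits the template in the definition of $\ugu$ provided one can send $\rho\to+\infty$ in the remainder term.

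The key integrability estimate is the standard Taylor remainder bound: for $|x|\le 1$, $|y|\ge R>3$, and any multi-index $\beta$ with $|\beta|\le m$,
\begin{equation*}
|\partial_x^\beta R_k(x,y)|\le C\,|y|^{-n-2s-k}.
\end{equation*}
One verifies this in two regimes: for $|\beta|<k$, $\partial_x^\beta R_k$ is itself the Taylor remainder of order $k-|\beta|$ of $\partial_x^\beta K$, giving $|\partial_x^\beta R_k|\le C|x|^{k-|\beta|}|y|^{-n-2s-k}$; for $|\beta|\ge k$, $\partial_x^\beta T_{k-1}\equiv 0$, so $\partial_x^\beta R_k=\partial_x^\beta K$, which is bounded by $C|y|^{-n-2s-|\beta|}\le C|y|^{-n-2s-k}$ since $|y|\ge R>1$. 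Thanks to \eqref{Dh0}, the integrand $|R_k(x,y)u(y)|$ is dominated by an integrable function of $y$ uniformly in $\rho$, so dominated convergence gives $f_\rho\to f:=-\int_{|y|\ge R}R_k(\cdot,y)\,u(y)\,dy$ pointwise in $B_1$, establishing $(-\Delta)^su\ugu f$ in $B_1$.

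Finally, the same domination justifies differentiating under the integral sign and, together with the bound $|y|^{n+2s+k}\ge\tfrac12(1+|y|^{n+2s+k})$ valid for $|y|\ge R>1$, yields
\begin{equation*}
\|f\|_{C^m(B_1)}\le C\int_{|y|\ge R}\frac{|u(y)|}{1+|y|^{n+2s+k}}\,dy,
\end{equation*}
which is the tail of the convergent integral in \eqref{Dh0} and therefore tends to $0$ as $R\to+\infty$; choosing $\bar R_\e$ so that this tail is below $\e$ gives \eqref{Dh}. The main technical points I expect are (i) the uniform handling of $\partial_x^\beta R_k$ across the two regimes $|\beta|<k$ and $|\beta|\ge k$, and (ii) checking that the pointwise identity $(-\Delta)^s(u\chi_\rho)=P_\rho+f_\rho$ on $B_1$ really realises the viscosity-sense decomposition demanded in the definition of $\ugu$ recalled around \eqref{RGAs}; both are essentially bookkeeping once the Taylor splitting is in place.
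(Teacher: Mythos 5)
Your proof is correct and follows essentially the same route as the paper: the paper simply cites Remark~3.5 of the reference on the divergent fractional Laplacian, which provides exactly the representation $f(x)=\int_{B_R^c}u(y)\,\psi(x,y)\,|y|^{-n-2s-k}\,dy$ with $\psi$ having bounded $x$-derivatives, and then concludes by the same tail estimate you give. Your Taylor splitting $K=T_{k-1}+R_k$ (with the two-regime bound on $\partial_x^\beta R_k$) is precisely a self-contained derivation of that cited remark, so the only difference is that you rederive the kernel decomposition rather than quoting it.
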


\begin{proof} {F}rom Remark~3.5 in~\cite{POLINOMI}, we can write
that~$(-\Delta)^s u\ugu f$ in~$B_1$, with
\begin{eqnarray*}&& f(x)=f_u(x):=
\int_{B_2} \frac{u(x)-u(y)}{|x-y|^{n+2s}}\,dy+
\int_{B_2^c} \frac{u(x)}{|x-y|^{n+2s}}\,dy+
\int_{B_2^c} \frac{u(y)\;\psi(x,y)}{|y|^{n+2s+k}}\,dy\\ &&\qquad\qquad\qquad=
\int_{B_R^c} \frac{u(y)\;\psi(x,y)}{|y|^{n+2s+k}}\,dy,\end{eqnarray*}
for some function~$\psi$ satisfying, for any~$j\in \N$,
$$\sup_{{x\in B_1},\,{y\in B_2^c}} 
|D^j_x \psi(x,y)| \le C_j,$$
for some~$C_j>0$.
In particular, for any~$x\in B_1$,
$$ |D^jf(x)|\le \int_{B_R^c} \frac{|u(y)|\;|D^j_x\psi(x,y)|
}{|y|^{n+2s+k}}\,dy\le C_j\,\int_{B_R^c} \frac{|u(y)|
}{|y|^{n+2s+k}}\,dy,$$
so the desired claim in~\eqref{Dh} follows from~\eqref{Dh0}.
\end{proof}

With this, we complete the proof of Theorem~\ref{ALL} in the following way.

\begin{proof}[Proof of Theorem~\ref{ALL}]
{F}rom Theorem~1.1 of~\cite{ALL-FUNCTIONS} we know that
there exist a function~$v_\eps$ and~$\rho_\eps>1$
such that
\begin{eqnarray}
\label{veps1}
&& (-\Delta)^s v_\eps = 0 \quad{\mbox{ in }}B_1,\\
\label{90hx:2}
&& \| v_\eps-u\|_{C^m({B_1})}\le \eps\\
\label{90hx:3}
{\mbox{and }}&& v_\eps=0 \quad{\mbox{ in }}B_{\rho_\eps}^c.
\end{eqnarray}
For any~$R>3$, we also set~$\tilde u_R:= (1-\chi_R)\,u$.
Notice that
\begin{equation}\label{90hx:9876}
\tilde u_R=u\quad{\mbox{ in }}B_R^c.\end{equation}
In addition,
\begin{equation}\label{90hx}
\tilde u_R=0\quad{\mbox{ in }}B_R,\end{equation} so, in view of
Lemma~\ref{LT8}, there exist
a function~$f_{\eps}$ and~${\bar{R}_\eps}>3$ such that
\begin{eqnarray}
\label{u6s8}&& (-\Delta)^s \tilde u_{{\bar{R}_\eps}} \ugu f_\eps
\quad{\mbox{ in }}B_2,\\ \label{fduyas}
{\mbox{and }}&& \|  f_\eps \|_{C^m(B_{2})}\le \eps.
\end{eqnarray}
Now we consider the standard solution of the Dirichlet
problem
\begin{equation}\label{weps1} \left\{
\begin{matrix}
(-\Delta)^s w_\eps=f_\eps \quad{\mbox{ in }}B_2,\\
w_\eps=0 \quad{\mbox{ in }}B_2^c.
\end{matrix}
\right.\end{equation}
{F}rom 
Proposition~1.1 in~\cite{ROS-JMPA}, we have that
\begin{eqnarray} \label{67dff}
\|w_\eps\|_{C^s(\R^n)}\le C\, \|f_\eps\|_{L^\infty(B_2)},\end{eqnarray}
for some~$C>0$. 

Now we take~$\gamma:=m-s$. Notice that~$\gamma\not\in\N$
and~$\gamma+2s=m+s\not\in\N$.
Then, by Schauder estimates (see e.g.
Theorem~1.3 in~\cite{POLINOMI}, applied here with~$k:=0$),
and exploiting~\eqref{fduyas}
and~\eqref{67dff}, possibly renaming~$C>0$ line after line,
we obtain that
\begin{equation}\label{90hx:1}
\begin{split}
&
\|w_\eps\|_{C^{m}(B_1)} \le
\|w_\eps\|_{C^{\gamma+2s}(B_1)} \le C\,\left( [f_\eps]_{C^\gamma(B_2)}
+\int_{B_1^c}\frac{|w_\eps(y)|}{|y|^{n+2s}}\,dy
\right)\\
&\qquad\qquad\qquad\le
C\,\left( \|f_\eps\|_{C^m(B_2)}+\|w_\eps\|_{L^\infty(\R^n)}
\right)\le C\,\|f_\eps\|_{C^m(B_2)}\le C\eps.
\end{split}\end{equation}
Now we define
$$ u_\eps:= v_\eps +\tilde u_{{\bar{R}_\eps}}-w_\eps.$$
Using~\eqref{veps1}, \eqref{weps1} and the consistency result
in Corollary~3.8 of~\cite{POLINOMI}, we see that
$$ (-\Delta)^s v_\eps {\;{\stackrel{0}{=}}\;}0 \quad {\mbox{ and }}\quad
(-\Delta)^s w_\eps{\;{\stackrel{0}{=}}\;}f_\eps \quad {\mbox{ in~$B_1$}}.$$
Thus, 
the consistency result in formula~(1.7)
of~\cite{POLINOMI} implies that
$$
(-\Delta)^s v_\eps {\;{\stackrel{k}{=}}\;}0 \quad {\mbox{ and }}\quad
(-\Delta)^s w_\eps{\;{\stackrel{k}{=}}\;}f_\eps \quad {\mbox{ in~$B_1$}}.$$
Consequently, from~\eqref{u6s8}, we deduce that
$(-\Delta)^s u_\eps\ugu 0+f_\eps-f_\eps$ in~$B_1$,
and this establishes~\eqref{DES:ALL1}.

Furthermore, from~\eqref{90hx:2},
\eqref{90hx} and~\eqref{90hx:1}, we see that
$$ \|u_\e-u\|_{C^m(B_1)}\le
\|v_\e-u\|_{C^m(B_1)}+\|\tilde u_{{\bar{R}_\eps}}\|_{C^m(B_1)}+
\|w_\e\|_{C^m(B_1)}\le \eps+0+C\eps.$$
This proves~\eqref{DES:ALL2} (up to renaming~$\eps$).

Now we take~$R_\eps:=\rho_\eps+{\bar{R}_\eps}$.
{F}rom~\eqref{90hx:3}, \eqref{90hx:9876} and~\eqref{weps1},
we have that, in~$B_{R_\eps}^c$, it holds that~$u_\eps=0+u-0$,
which establishes~\eqref{dtsfgyvhoqwfyguywqegfiowel},
as desired.
\end{proof}

\section{Proof of Theorem~\ref{DIRI}}\label{SF-MAJo2}

First, we prove the existence result
in Theorem~\ref{DIRI}. To this aim, we let~$u_0$ and~$f$
be as in the statement of Theorem~\ref{DIRI} and we
define
$$ u_1:= \chi_{B_2^c}\,u_0\quad
{\mbox{ and }}\quad u_2:=
\chi_{B_2\setminus B_1} \,u_0.$$
We stress that~$u_1$ is smooth in~$B_1$ and~$u_2$
is supported in~$B_2$.

{F}rom Remark~3.5 in~\cite{POLINOMI}, we can write~$(-\Delta)^s
u_1\ugu f_{u_1}$ in~$B_1$, for a suitable function~$f_{u_1}$.

Now we set~$\tilde f:= f-f_{u_1}$ and we consider the
solution of the standard problem
\begin{equation*} \left\{
\begin{matrix}
(-\Delta)^s \tilde u = \tilde f & {\mbox{ in }} B_1,\\
\tilde u=u_2 & {\mbox{ in }} B_1^c.
\end{matrix}
\right.\end{equation*}
Hence, the consistency result in Corollary~3.8
and formula~(1.7) in~\cite{POLINOMI} give that
\begin{equation*} \left\{
\begin{matrix}
(-\Delta)^s \tilde u \ugu \tilde f & {\mbox{ in }} B_1,\\
\tilde u=u_2 & {\mbox{ in }} B_1^c.
\end{matrix}
\right.\end{equation*}
Then, we define~$u:= u_1+\tilde u$
and we see that~$(-\Delta)^s u\ugu f_{u_1}+\tilde f=f$
in~$B_1$. Moreover, in~$B_1^c$ it holds that~$u=u_1+u_2=u_0$,
namely~$u$ is a solution of~\eqref{DIRI2}.
This establishes the existence result 
in Theorem~\ref{DIRI}.
\medskip

Now, we prove the uniqueness claim in Theorem~\ref{DIRI}.
For this, we observe that for any polynomial~$P$ of degree
at most~$k-1$ there exists a unique solution~$u_P$
of the standard problem
\begin{equation}\label{UN} \left\{
\begin{matrix}
(-\Delta)^s u_P = P & {\mbox{ in }} B_1,\\
u_P=0 & {\mbox{ in }} B_1^c.
\end{matrix}
\right.\end{equation}
That is, in view of the
consistency result in Corollary~3.8 of~\cite{POLINOMI},
we have that~$(-\Delta)^s u_P
{\;{\stackrel{0}{=}}\;} P$ in~$B_1$.
Accordingly, from formula~(1.7) in~\cite{POLINOMI},
we get that~$(-\Delta)^s u_P
{\;{\stackrel{k}{=}}\;} P$ in~$B_1$.
Then, by formula~(1.8) in~\cite{POLINOMI},
it follows that~$u_P$ is a solution of
$$ \left\{
\begin{matrix}
(-\Delta)^s u_P \ugu 0 & {\mbox{ in }} B_1,\\
u_P=0 & {\mbox{ in }} B_1^c.
\end{matrix}
\right.$$
This means that if~$u$ is a solution of~\eqref{DIRI2}, then so is~$u+u_P$.

Conversely, if~$u$ and~$v$ are two solutions,
then~$w:=v-u$ satisfies
$$ \left\{
\begin{matrix}
(-\Delta)^s w \ugu 0 & {\mbox{ in }} B_1,\\
w=0 & {\mbox{ in }} B_1^c.
\end{matrix}
\right.$$
This and the consistency result in Lemma~3.9
of~\cite{POLINOMI} (used here with~$j:=0$)
give that~$
(-\Delta)^s w
{\;{\stackrel{0}{=}}\;} P$ in~$B_1$, for some polynomial~$P$
of degree at most~$k-1$.
Hence, using the consistency result in Corollary~3.8
of~\cite{POLINOMI}, we can write
$$ \left\{
\begin{matrix}
(-\Delta)^s w = P & {\mbox{ in }} B_1,\\
w=0 & {\mbox{ in }} B_1^c.
\end{matrix}
\right.$$
{F}rom the uniqueness of the solution
of the standard problem in~\eqref{UN}, we conclude that~$w=u_P$,
and so~$v=u+u_P$.

These observations yield that the space of solutions of~\eqref{DIRI2}
is isomorphic to the space of polynomials~$P$ with degree less than
or equal to~$k-1$, which in turn has dimension~$N_k$,
as given in~\eqref{NK} (see e.g.~\cite{2021arXiv210107941D}).

\section{Proof of Theorem~\ref{NONLI}}\label{SF-MAJo3}

We can extend~$u$ such that~$u\in C^{2m}(B_{1+h})$, for some~$h\in(0,1)$.
Then, for all~$x\in B_{1+h}$, we define~$f(x):=
F\big(x,u(x),\nabla u(x),\dots,D^m u(x)\big)$.
Then, $f\in C^m(B_{1+h})$ and we can exploit Theorem~\ref{DIRI}
and obtain a function~$v\in{\mathcal{U}}_k$ such that
\begin{equation*}
\left\{
\begin{matrix}
(-\Delta)^s v \ugu f \quad{\mbox{ in }}\;B_{1+h},\\
v = 0
\quad{\mbox{ in }}\;B_{1+h}^c.
\end{matrix}
\right.
\end{equation*}
By Theorem~1.3 in~\cite{POLINOMI}, we have that~$
v\in C^m(B_1)$.
Hence, we can set~$w:=u-v\in C^m(B_1)$ and make use of
Theorem~\ref{ALL} to find~$w_\eps$ and~$R_\eps>2$
such that
\begin{eqnarray*}
&& (-\Delta)^s w_\eps \ugu 0 \quad{\mbox{ in }}B_1,\\
&& \| w_\eps-w\|_{C^m({B_1})}\le \eps\\
{\mbox{and }}&& w_\eps=w \quad{\mbox{ in }}B_{R_\eps}^c.
\end{eqnarray*}
Now, we define~$u_\e:= v+w_\e$.
We observe that
$$ (-\Delta)^s u_\eps(x) \ugu  (-\Delta)^s v(x)+ (-\Delta)^s w_\eps(x)
\ugu f(x) =
F\big(x,u(x),\nabla u(x),\dots,D^m u(x)\big)
$$
for all $x\in B_1$.
 
This gives that~\eqref{AMLO-1} is satisfied with
\begin{equation} \label{65654219}\eta_\e(x):= F\big(x,u(x),\nabla u(x),\dots,D^m u(x)\big)-
F\big(x,u_\e(x),\nabla u_\e(x),\dots,D^m u_\e(x)\big).\end{equation}
Moreover, in~$B_{R_\eps}^c$,
$$ u_\e=v+w_\e=v+w=u,$$
and this proves~\eqref{AMLO-4}.

Furthermore,
$$\|u_\e-u\|_{C^m(B_1)}=\|v+w_\e-u\|_{C^m(B_1)}
=\|w_\e-w\|_{C^m(B_1)}\le\e,$$
which establishes~\eqref{AMLO-3}.

Then, we take
$$ S:=2+\sum_{j=0}^m \| D^j u\|_{L^\infty(B_1)}$$
and we denote by~$L$ the Lipschitz norm of~$F$
in~$[-S,S]^{N(m)}$. Thus, employing~\eqref{AMLO-3} and~\eqref{65654219},
for all~$x\in B_1$ we have that
\begin{eqnarray*}&& |\eta_\e(x)|=\Big| F\big(x,u(x),\nabla u(x),\dots,D^m u(x)\big)-
F\big(x,u_\e(x),\nabla u_\e(x),\dots,D^m u_\e(x)\big)\Big|
\\&&\qquad\le L\,\sum_{j=0}^m |D^j u(x)-D^ju_\e(x)|\le
Lm\,\|u_\e-u\|_{C^m(B_1)}\le Lm\e,\end{eqnarray*}
and this gives~\eqref{AMLO-2}, up to renaming~$\e$.

\section*{References}
\begin{biblist}[\normalsize]

\bib{MR3716924}{article}{
   author={Bucur, Claudia},
   title={Local density of Caputo-stationary functions in the space of
   smooth functions},
   journal={ESAIM Control Optim. Calc. Var.},
   volume={23},
   date={2017},
   number={4},
   pages={1361--1380},
   issn={1292-8119},
   review={\MR{3716924}},
   doi={10.1051/cocv/2016056},
}

\bib{CAR}{article}{
author={Carbotti, Alessandro},
      author={Dipierro, Serena},
         author={Valdinoci, Enrico},
title = {Local density of Caputo-stationary functions of any order},
journal = {Complex Var. Elliptic Equ.},
doi = {10.1080/17476933.2018.1544631},
URL = {https://doi.org/10.1080/17476933.2018.1544631},
}

\bib{CARBOO}{book}{
   author={Carbotti, Alessandro},
      author={Dipierro, Serena},
         author={Valdinoci, Enrico},
         title={Local density of solutions to fractional equations},
series={De Gruyter Studies in Mathematics 74},
   publisher={De Gruyter, Berlin},
   date={2019},
ISBN={978-3-11-066435-5},
   }

\bib{ALL-FUNCTIONS}{article}{
   author = {Dipierro, Serena},
author = {Savin, Ovidiu},
author = {Valdinoci, Enrico},
title={All functions are locally $s$-harmonic up to a small error},
   journal={J. Eur. Math. Soc. (JEMS)},
   volume={19},
   date={2017},
   number={4},
   pages={957--966},
   issn={1435-9855},
   review={\MR{3626547}},
   doi={10.4171/JEMS/684},
}

\bib{MR3935264}{article}{
   author={Dipierro, Serena},
   author={Savin, Ovidiu},
   author={Valdinoci, Enrico},
   title={Local approximation of arbitrary functions by solutions of
   nonlocal equations},
   journal={J. Geom. Anal.},
   volume={29},
   date={2019},
   number={2},
   pages={1428--1455},
   issn={1050-6926},
   review={\MR{3935264}},
   doi={10.1007/s12220-018-0045-z},
}

\bib{POLINOMI}{article}{
   author={Dipierro, Serena},
   author={Savin, Ovidiu},
   author={Valdinoci, Enrico},
   title={Definition of fractional Laplacian for functions with polynomial
   growth},
   journal={Rev. Mat. Iberoam.},
   volume={35},
   date={2019},
   number={4},
   pages={1079--1122},
   issn={0213-2230},
   review={\MR{3988080}},
   doi={10.4171/rmi/1079},
}

\bib{2021arXiv210107941D}{article}{
       author = {Dipierro, Serena},
       author = {Valdinoci, Enrico},
        title = {Elliptic partial differential equations from an elementary viewpoint},
      journal = {arXiv e-prints},
 date = {2021},
          eid = {arXiv:2101.07941},
        pages = {arXiv:2101.07941},
archivePrefix = {arXiv},
       eprint = {2101.07941},
       adsurl = {https://ui.adsabs.harvard.edu/abs/2021arXiv210107941D},
}

\bib{KRYL}{article}{
       author = {Krylov, N.~V.},
        title = {On the paper ``All functions are locally $s$-harmonic up to a small error'' by Dipierro, Savin, and Valdinoci},
      journal = {arXiv e-prints},
         date = {2018},
archivePrefix = {arXiv},
       eprint = {1810.07648},
      adsurl = {https://ui.adsabs.harvard.edu/abs/2018arXiv181007648K},
}

\bib{ROS-JMPA}{article}{
   author={Ros-Oton, Xavier},
   author={Serra, Joaquim},
   title={The Dirichlet problem for the fractional Laplacian: regularity up
   to the boundary},
   language={English, with English and French summaries},
   journal={J. Math. Pures Appl. (9)},
   volume={101},
   date={2014},
   number={3},
   pages={275--302},
   issn={0021-7824},
   review={\MR{3168912}},
   doi={10.1016/j.matpur.2013.06.003},
}

\bib{MR3774704}{article}{
   author={R\"{u}land, Angkana},
   author={Salo, Mikko},
   title={Exponential instability in the fractional Calder\'{o}n problem},
   journal={Inverse Problems},
   volume={34},
   date={2018},
   number={4},
   pages={045003, 21},
   issn={0266-5611},
   review={\MR{3774704}},
   doi={10.1088/1361-6420/aaac5a},
}

\bib{MR3790948}{article}{
   author={Valdinoci, Enrico},
   title={All functions are (locally) $s$-harmonic (up to a small
   error)---and applications},
   conference={
      title={Partial differential equations and geometric measure theory},
   },
   book={
      series={Lecture Notes in Math.},
      volume={2211},
      publisher={Springer, Cham},
   },
   date={2018},
   pages={197--214},
   review={\MR{3790948}},
}

\end{biblist}

\end{document}